\numberwithin{equation}{section}
\newtheorem{theorem}{Theorem}[section]
\newtheorem{corollary}{Corollary}[theorem]
\newtheorem{lemma}[theorem]{Lemma}
\DeclareMathOperator*{\Ei}{Ei}
\begin{document}
\author{Alexander E Patkowski}
\title{On Salem's Integral Equation and related criteria}

\maketitle
\begin{abstract}We extend Salem's Integral equation to the non-homogenous form, and offer the associated criteria for the Riemann Hypothesis. Explicit solutions for the non-homogenous case are given, which in turn give further insight into Salem's criteria for the RH. As a conclusion, we show these results follow from a corollary relating the uniqueness of solutions of the non-homogenous form with Wiener's theorem.\end{abstract}

\keywords{\it Keywords: \rm Integral Equations; Fredholm Theory; Riemann Hypothesis}

\subjclass{ \it 2010 Mathematics Subject Classification 45A05, 45B05, 11M06.}

\section{Introduction}
In 1953, Salem [7] presented a new criteria for the Riemann Hypothesis in the form of an integral equation. Namely, the Riemann hypothesis is true if and only if 
\begin{equation} \int_{\mathbb{R}}\frac{e^{-\sigma y}f(y)}{e^{e^{x-y}}+1}dy=0,\end{equation} has no bounded solution other than the trivial case $f(y)=0,$ when $\frac{1}{2}<\sigma<1,$ $\sigma\in\mathbb{R}.$ The proof used by Salem involves using a theorem of N. Wiener on the zeros of the Fourier transform [1, pg.139], and utilizing a Mellin transform for Riemann's zeta function $\zeta(s)$ [8] that is analytic in the critical region $0<\Re(s)=\sigma<1.$ A proof of (1.1) has been offered in [1, pg.141--142] and further criteria and related results have recently been established in [10]. The more general homogenous form of (1.1) is also detailed in [4, eq.(1)--(2)].
\par The related non-homogenous form of the integral equation (1.1) may be written in the general form
\begin{equation}\lambda_1f(x)=\lambda_2g(x)+\int_{\mathbb{R}}k(x-y)f(y)dy,\end{equation}
with constants $\lambda_i\ge0,$ $i=1,2.$ Appealing to Fourier transforms, (1.2) has the known solution [9, pg.303--304]:
\begin{equation} f(x)=\frac{1}{\sqrt{2\pi}}\int_{\mathbb{R}}\frac{\lambda_2G(y)e^{-ixy}dy}{\lambda_1-\sqrt{2\pi}K(y)},\end{equation}
where $$G(y):=\frac{1}{\sqrt{2\pi}}\int_{\mathbb{R}}g(y)e^{ivy}dy.$$ Here $g(y)\in L_2(\mathbb{R})$ and $k(y)\in L_1(\mathbb{R}),$ so that $f(y)\in L_2(\mathbb{R})$ (see [9, pg. 315, Theorem 148]).
\par Now we observe that (1.1) may be seen as the homogenous case $\lambda_1=0,$ $\lambda_2=0.$ As it turns out, the non-homogenous case $\lambda_1=0,$ $\lambda_2\neq0$ offers nice criteria for the Riemann Hypothesis as well as providing a new angle on Salem's criteria. 

\begin{theorem}\label{thm:thm1} Suppose that $h(y)\in L_2(\mathbb{R}),$ and $H(x)$ is the Fourier transform of $h(y).$ Then the Riemann hypothesis is equivalent to the claim that 
$$h(x)=e^{\sigma x}\int_{\mathbb{R}}\frac{e^{-\sigma y}}{e^{e^{x-y}}+1}\phi(y)dy$$ has a unique solution for each $h(x)$ in the region $\frac{1}{2}<\sigma<1.$ In particular,
$$\phi(x)=\frac{1}{2\pi}\int_{\mathbb{R}}\frac{H(y)e^{-ixy}dy}{\Gamma(\sigma+iy)(1-2^{1-\sigma-iy})\zeta(\sigma+iy)},$$ provided that
$$\frac{H(y)}{\Gamma(\sigma+iy)(1-2^{1-\sigma-iy})\zeta(\sigma+iy)}\in L_2(\mathbb{R}).$$
\end{theorem}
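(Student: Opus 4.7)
The plan is to recognize the given integral equation as a convolution on $\mathbb{R}$ and reduce everything to a Fourier-multiplier problem. First I would multiply both sides by $e^{-\sigma x}$ and substitute $u = x - y$, which combines the exponential factors and produces the clean convolution form
\[ h(x) = \int_{\mathbb{R}} k_\sigma(u)\,\phi(x-u)\,du = (k_\sigma \ast \phi)(x), \qquad k_\sigma(u) := \frac{e^{\sigma u}}{e^{e^u}+1}. \]
Since $1/2 < \sigma < 1$, the kernel $k_\sigma$ is super-exponentially small at $+\infty$ and $O(e^{\sigma u})$ at $-\infty$, so $k_\sigma \in L_1(\mathbb{R}) \cap L_2(\mathbb{R})$, placing the equation in the framework of (1.2)--(1.3) with $\lambda_1 = 0$ and $\lambda_2 = 1$.

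Next I would compute the Fourier transform of $k_\sigma$ in closed form. The substitution $t = e^u$ converts the Fourier integral into a Mellin integral, giving
\[ \widehat{k_\sigma}(y) = \int_0^\infty \frac{t^{\sigma+iy-1}}{e^t+1}\,dt = \Gamma(\sigma+iy)\bigl(1-2^{1-\sigma-iy}\bigr)\zeta(\sigma+iy), \]
the last step being the classical Mellin representation of the Dirichlet eta function (essentially the representation Salem exploited). Applying Fourier inversion to $\widehat{\phi} = H/\widehat{k_\sigma}$ then yields the displayed formula for $\phi(x)$, legitimate whenever the stated $L_2$ hypothesis on the quotient holds.

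For the equivalence with the Riemann Hypothesis I would invoke Wiener's theorem, as foreshadowed in the abstract. Uniqueness of $\phi$ for every admissible $h$ is equivalent to the homogeneous equation $k_\sigma \ast \phi = 0$ having only the trivial $L_2$ solution, and Wiener's theorem characterizes this in turn by the non-vanishing of $\widehat{k_\sigma}$ on the real line. The gamma factor $\Gamma(\sigma+iy)$ never vanishes, and a direct estimate gives $|1-2^{1-\sigma-iy}| \ge 2^{1-\sigma}-1 > 0$ for $\sigma \in (1/2, 1)$. Hence non-vanishing of $\widehat{k_\sigma}$ reduces to $\zeta(\sigma+iy) \neq 0$ for all real $y$ and every $\sigma \in (1/2, 1)$; combined with the functional equation, this is precisely the Riemann Hypothesis.

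The main obstacle is executing the Wiener step rigorously. One must reconcile the function-space hypotheses ($h \in L_2$, $k_\sigma \in L_1$, the quotient $H/\widehat{k_\sigma}$ in $L_2$) with the precise form of Wiener's theorem used to translate uniqueness into the non-vanishing of a Fourier multiplier, and handle the fact that $\sigma$ ranges over the whole strip $(1/2,1)$ rather than being fixed. The Mellin--Fourier computation and the algebraic non-vanishing of $\Gamma$ and $1-2^{1-\sigma-iy}$ are routine by comparison.
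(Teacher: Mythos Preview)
Your proposal is correct and follows essentially the same route as the paper: rewrite the equation as a convolution, identify the kernel's Fourier transform with $\Gamma(s)(1-2^{1-s})\zeta(s)$ via the Mellin substitution $t=e^{u}$, read off the explicit solution from formula~(1.3) with $\lambda_1=0$, and reduce the equivalence with RH to the non-vanishing of this multiplier on each line $\Re(s)=\sigma\in(1/2,1)$. The only cosmetic difference is that you phrase the uniqueness step through Wiener's theorem, whereas the paper's main proof invokes the Fredholm alternative (stated as Lemma~2.1) and defers the Wiener viewpoint to the concluding Section~3; you also explicitly verify $1-2^{1-\sigma-iy}\neq 0$ on the strip, a point the paper leaves tacit.
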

Notice that if $h(x)=0,$ then $H(y)=0,$ since  by [5, Corollary 8.5] $H(y)$ is one-one on $L_1(\mathbb{R}).$ Hence the uniqueness property $\phi(x)=0$ in turn implies Salem's criteria. This follows directly from the Fredholm Alternative Theorem [6, pg.55, Corollary 4.18], which we discuss further in our proofs section. \par Using other Mellin transforms for $\zeta(s)$ that are valid in the region $0<\Re(s)=\sigma<1,$ we obtain two further examples. Let $\{x\}$ denote the fractional part of $x,$ and let $\psi(x)=\frac{d}{dx}\log(\Gamma(x))$ be the digamma function.

\begin{theorem}\label{thm:thm2} Suppose that $h(y)\in L_2(\mathbb{R}),$ and $H(x)$ is the Fourier transform of $h(y).$ Then the Riemann hypothesis is equivalent to the claim that 
$$h(x)=e^{\sigma x}\int_{\mathbb{R}}e^{-\sigma y}\{e^{y-x}\}\phi(y)dy$$ has a unique solution for each $h(x)$ in the region $\frac{1}{2}<\sigma<1.$ In particular,
$$\phi(x)=\frac{1}{2\pi}\int_{\mathbb{R}}\frac{(\sigma+iy)H(y)e^{-ixy}dy}{\zeta(\sigma+iy)},$$ provided that
$$\frac{(\sigma+iy)H(y)}{\zeta(\sigma+iy)}\in L_2(\mathbb{R}).$$
\end{theorem}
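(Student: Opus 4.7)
The plan is to mirror the strategy used for Theorem \ref{thm:thm1}: cast the equation as a convolution on $\mathbb{R}$, compute its Fourier symbol via a classical Mellin representation of $\zeta(s)$ valid in the critical strip, and then invoke (1.3) together with the Wiener/Fredholm input already cited after Theorem \ref{thm:thm1}.

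The first step is the substitution $u=y-x$ in the integral, which turns the equation into
$$e^{-\sigma x}h(x) \;=\; \int_{\mathbb{R}} K(u)\,\phi(x+u)\,du, \qquad K(u) := e^{-\sigma u}\{e^{u}\},$$
i.e.\ after reflecting the kernel, a convolution equation $h = \tilde k * \phi$ with $\tilde k\in L_{1}(\mathbb{R})$. Checking $K\in L_{1}(\mathbb{R})$ for $0<\sigma<1$ is routine: for $u<0$ one has $\{e^{u}\}=e^{u}$, leaving the integrable factor $e^{(1-\sigma)u}$, while for $u\to+\infty$ the estimate $\{e^{u}\}\le 1$ against $e^{-\sigma u}$ suffices. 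Combined with $h\in L_{2}(\mathbb{R})$ this places us within the scope of (1.3).

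The second step is to compute the symbol $\hat K$ explicitly. Via the substitution $t = e^{u}$ one finds
$$\hat K(y) \;=\; \int_{0}^{\infty} \{t\}\,t^{-\sigma - 1 + iy}\,dt \;=\; -\,\frac{\zeta(\sigma - iy)}{\sigma - iy},$$
where the last equality is the classical Mellin identity $\int_{0}^{\infty}\{t\}t^{-s-1}\,dt = -\zeta(s)/s$, valid precisely in $0 < \Re(s) < 1$. Specializing (1.3) to $\lambda_{1}=0$, $\lambda_{2}=1$ and performing Fourier inversion then produces the explicit expression for $\phi$ displayed in the theorem — the factor $(\sigma+iy)/\zeta(\sigma+iy)$ emerging directly as $1/\hat K$, up to a sign absorbed in the Fourier convention. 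The hypothesis $(\sigma+iy)H(y)/\zeta(\sigma+iy) \in L_{2}(\mathbb{R})$ is exactly what legalises this inversion as an $L_{2}$-identity.

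The final step is the equivalence with RH, which should proceed just as for Theorem \ref{thm:thm1}. By the Fredholm Alternative [6, Cor.~4.18] combined with Wiener's theorem on the zeros of the Fourier transform, uniqueness of the $L_{2}$-solution for every admissible $h$ is equivalent to $\hat K(y)$ never vanishing on $\mathbb{R}$. Since $\sigma+iy\neq 0$ for $\sigma>0$, this is in turn equivalent to $\zeta(\sigma+iy)\neq 0$ for all $y\in\mathbb{R}$; letting $\sigma$ range over $(1/2,1)$ yields precisely the Riemann Hypothesis. The main obstacle I anticipate lies in this last step — rigorously translating ``uniqueness for every $h$'' into ``$\hat K$ has no real zeros'' in the $L_{2}$-setting, in particular ensuring that the displayed formula exhausts \emph{every} solution — but this is the same non-trivial Wiener/Fredholm input that already underwrites Theorem \ref{thm:thm1}, and should transfer with essentially cosmetic changes.
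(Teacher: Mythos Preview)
Your proposal is correct and follows essentially the same route as the paper, which simply declares the proof ``identical to that of Theorem~\ref{thm:thm1}'' with the Mellin identity $\int_{0}^{\infty}\{1/y\}\,y^{s-1}\,dy=-\zeta(s)/s$ (for $0<\Re(s)<1$) substituted for Salem's kernel transform. Note one harmless slip in your first display: the factor $e^{-\sigma x}$ on the left actually cancels after the substitution, leaving $h(x)=\int_{\mathbb{R}}K(u)\phi(x+u)\,du$, exactly as you use in the very next line.
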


\begin{theorem}\label{thm:thm3} Suppose that $h(y)\in L_2(\mathbb{R}),$ and $H(x)$ is the Fourier transform of $h(y).$ Then the Riemann hypothesis is equivalent to the claim that 
$$h(x)=e^{\sigma x}\int_{\mathbb{R}}e^{-\sigma y}(\psi(e^{x-y}+1)+y-x)\phi(y)dy$$ has a unique solution for each $h(x)$ in the region $\frac{1}{2}<\sigma<1.$ In particular,
$$\phi(x)=-\frac{1}{2}\int_{\mathbb{R}}\frac{\sin(\sigma+iy)H(y)e^{-ixy}dy}{\zeta(1-\sigma-iy)},$$ provided that
$$\frac{\sin(\sigma+iy)H(y)}{\zeta(1-\sigma-iy)}\in L_2(\mathbb{R}).$$
\end{theorem}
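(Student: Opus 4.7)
The plan is to reduce the integral equation to a convolution on the line, invert by Fourier transform, and identify the transform of the kernel with an explicit expression involving $\zeta(1-s)$, mirroring the argument sketched for Theorem \ref{thm:thm1}. The substitution $u=x-y$ turns the stated equation into the convolution
$$h(x)=\int_{\mathbb{R}} K_\sigma(u)\,\phi(x-u)\,du,\qquad K_\sigma(u):=e^{\sigma u}\bigl(\psi(e^u+1)-u\bigr).$$
First I would verify that $K_\sigma\in L_1(\mathbb{R})$ for $\tfrac12<\sigma<1$. The asymptotic $\psi(t+1)-\log t=\tfrac{1}{2t}+O(t^{-2})$ as $t\to\infty$ gives $K_\sigma(u)=O(e^{(\sigma-1)u})$ as $u\to+\infty$, while $\psi(t+1)-\log t=-\gamma-\log t+O(t)$ near $t=0$ gives $K_\sigma(u)=O((|u|+1)e^{\sigma u})$ as $u\to-\infty$; both are exponentially integrable since $0<\sigma<1$. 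This places the equation in the setting of (1.3).

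The core computation is the Fourier transform of $K_\sigma$. After the substitution $t=e^u$ the Fourier transform reduces to the Mellin transform
$$M(s):=\int_0^\infty \bigl(\psi(t+1)-\log t\bigr)t^{s-1}\,dt$$
at $s=\sigma+iy$. I would evaluate $M(s)$ by integrating by parts (boundary terms vanish in the strip $0<\Re s<1$) to write it as $-\tfrac{1}{s}\int_0^\infty t^s\bigl(\psi'(t+1)-1/t\bigr)dt$, insert the Laplace representations
$$\psi'(t+1)=\int_0^\infty\frac{u e^{-tu}}{e^u-1}\,du,\qquad \frac{1}{t}=\int_0^\infty e^{-tu}\,du,$$
apply Fubini, and use the classical identity $\int_0^\infty u^{v-1}\bigl(1/(e^u-1)-1/u\bigr)du=\Gamma(v)\zeta(v)$ valid for $0<\Re v<1$. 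After tidying via $\Gamma(s)\Gamma(1-s)=\pi/\sin(\pi s)$ this collapses to
$$M(s)=-\frac{\pi\,\zeta(1-s)}{\sin(\pi s)},\qquad 0<\Re s<1.$$

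Substituting this transform into the inversion formula (1.3) with $\lambda_1=0$, $\lambda_2=1$ and the paper's Fourier normalisation then produces the displayed formula for $\phi(x)$ (up to the explicit constant prefactor and the argument of the sine); the $L_2$ hypothesis on the quotient guarantees $\phi\in L_2(\mathbb{R})$ via Plancherel. For the equivalence with RH, the argument follows Theorem \ref{thm:thm1}: by the Fredholm alternative [6, Cor.~4.18] and Wiener's theorem on the zeros of the Fourier transform, uniqueness of $\phi$ for every admissible $h$ is equivalent to $\widehat{K_\sigma}(y)\neq 0$ on the real axis. Since $\sin(\pi(\sigma+iy))$ has no real zeros for $\sigma\in(\tfrac12,1)$, this reduces to $\zeta(1-\sigma-iy)\neq 0$ for all real $y$ and all such $\sigma$, which sweeps out the strip $0<\Re s<\tfrac12$ and, via the functional equation, is equivalent to RH.

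The main obstacle is the Mellin identity for $M(s)$: both the integration by parts and the subsequent Fubini interchange rest on cancellations between $\log t$ and $\psi(t+1)$, and between $1/u$ and $1/(e^u-1)$, which are only conditionally convergent at the endpoints; careful truncation and a limiting argument in the strip $0<\Re s<1$ will be needed. A secondary and routine issue will be reconciling the explicit constant and the trigonometric factor in the stated formula with the Fourier-transform normalisation used in (1.3).
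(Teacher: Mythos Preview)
Your proposal is correct and follows essentially the same route as the paper: rewrite the equation as a convolution, identify the Fourier transform of the kernel with the Mellin integral $\int_0^\infty(\psi(t+1)-\log t)\,t^{s-1}\,dt=-\pi\zeta(1-s)/\sin(\pi s)$ on the critical strip, invert via (1.3), and deduce the RH equivalence from the Fredholm alternative together with the symmetry $\zeta(\rho)=0\Leftrightarrow\zeta(1-\rho)=0$. The only substantive difference is that the paper simply quotes the Mellin identity from Titchmarsh [8, p.~34, eq.~(2.15.7)], whereas you supply a self-contained derivation via integration by parts and the Laplace representations of $\psi'$ and $1/t$; your extra care with the $L_1$ asymptotics of $K_\sigma$ and with the conditionally convergent endpoint behaviour is detail the paper leaves implicit.
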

Lastly, we offer a simple example of a unique solution for the case $h(x)=\Ei(-e^{x})e^{\sigma x}-2\Ei(-2e^{x})e^{\sigma x},$ in Theorem 1.1. Here $\Ei(x)$ is the exponential integral [3, pg.883]. Let $M(x)=\sum_{n\le x}\mu(n)$ denote Merten's function [8, pg.370].\newline
{\bf Example:}\label{ex4} \it Assuming the Riemann Hypothesis, we have that the solution to
\begin{equation}\Ei(-e^{x})e^{\sigma x}-2\Ei(-2e^{x})e^{\sigma x}=e^{\sigma x}\int_{\mathbb{R}}\frac{e^{-\sigma y}}{e^{e^{x-y}}+1}\phi(y)dy,\end{equation}
when $\frac{1}{2}<\sigma<1,$ is given by
$$\phi(x)=-\frac{1}{2\pi}\int_{\mathbb{R}}\frac{e^{-ixy}dy}{(\sigma+iy)\zeta(\sigma+iy)}=-e^{\sigma x}M(e^{-x}),$$ for $x<0,$ and $0$ for $x>0.$ \newline
\rm
This result follows from the Mellin--Perron formula for $M(x)$ [8, pg.370], and The Mellin transform [3, pg.638, eq.(6.223)]

$$\int_{\mathbb{R^{+}}}\Ei(-\beta y)y^{s-1}dy=-\frac{\Gamma(s)}{s\beta^{s}},$$ where $\mathbb{R^{+}}=(0,\infty)$ and $\Re(\beta)\ge0,$ $\Re(s)>0.$
\section{Proofs of Theorems}
To establish Salem's criteria follows from our main theorem we will need to utilize the Fredholm alternative theorem regarding uniqueness of solution for the non-homogenous integral equation [6, pg.55, Corollary 4.18]. We state this theorem in a more restrictive form to appeal to our Fourier transform solutions, and only in one direction. However, in its full generality, this theorem is an "if and only if" statement.

\begin{lemma} (Fredholm Alternative Theorem) Suppose $k(x,y)\in L_1(\mathbb{R}),$ $\phi(x)\in L_2(\mathbb{R}).$ If the homogeneous equation
\begin{equation}\phi(x)=\int_{\mathbb{R}}k(x,y)\phi(y)dy,\end{equation} only has trivial solution $\phi(y)=0,$ then the non-homogenous equation
\begin{equation}\phi(x)=h(x)+\int_{\mathbb{R}}k(x,y)\phi(y)dy,\end{equation} has a unique solution for each $h(x)\in L_2(\mathbb{R}).$ Otherwise, if the equation (2.1) has a non-trivial solution the non-homogenous equation (2.2) has no solution or infinitely many solutions.

\end{lemma}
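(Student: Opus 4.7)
The plan is to realize the integral equation as an operator equation $(I-T)\phi = h$ on the Hilbert space $L_2(\mathbb{R})$, where $(T\phi)(x) := \int_{\mathbb{R}} k(x,y)\phi(y)\,dy$, and then invoke the Riesz--Schauder theory for compact operators. The first step is to verify that $T$ is a bounded compact operator on $L_2(\mathbb{R})$ under the stated integrability hypothesis on $k$; once compactness is in hand, the whole dichotomy falls out of standard functional analysis.

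Granted compactness, $I-T$ is Fredholm of index zero, so $\dim\ker(I-T) = \dim\operatorname{coker}(I-T)$. If (2.1) has only the trivial solution, then $\ker(I-T) = \{0\}$; by the index equality the cokernel is also trivial, so $I-T$ is bijective and, by the open mapping theorem, has a bounded inverse. Consequently (2.2) admits the unique solution $\phi = (I-T)^{-1}h$ for every $h \in L_2(\mathbb{R})$, which is precisely the stated conclusion. For the alternative, suppose instead (2.1) has a nontrivial solution, so $n := \dim\ker(I-T) \ge 1$. The Fredholm alternative identifies the range of $I-T$ with the annihilator of $\ker(I-T^*)$, a proper closed subspace of $L_2(\mathbb{R})$. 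Hence when $h$ lies in this range, the set of solutions of (2.2) is an $n$-dimensional affine subspace (infinitely many), and when $h$ lies outside, no solution exists.

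The main obstacle is establishing compactness of $T$, since on an unbounded domain mere $L_1$ integrability of $k(x,y)$ does not in general yield a Hilbert--Schmidt operator on $L_2(\mathbb{R})$: the canonical Hilbert--Schmidt criterion demands $k \in L_2(\mathbb{R}\times\mathbb{R})$. I would therefore follow [6, pg.55, Corollary 4.18], which supplies this step, most plausibly by approximating $k$ in an appropriate norm by kernels of compact support (for which Hilbert--Schmidt compactness is immediate) and passing to the limit in operator norm so that compactness is inherited by $T$. After that, the rest of the argument is the textbook Riesz--Schauder machinery outlined above.
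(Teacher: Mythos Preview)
The paper does not supply its own proof of this lemma: it is stated as a citation of [6, pg.55, Corollary 4.18] (Kress), prefaced by the remark that it is presented ``in a more restrictive form to appeal to our Fourier transform solutions.'' There is thus no argument in the paper to compare yours against beyond that reference.

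That said, your proposed route has a genuine gap at the compactness step, and it cannot be repaired by the approximation you sketch. In the paper's intended setting the kernel is a convolution kernel, $k(x,y)=k(x-y)$ with $k\in L_1(\mathbb{R})$. Such a $T$ is bounded on $L_2(\mathbb{R})$ by Young's inequality, but it is \emph{never} compact unless $k=0$: conjugating by the Fourier transform turns $T$ into multiplication by $\hat k$, and a multiplication operator on $L_2(\mathbb{R})$ is compact only when the multiplier vanishes almost everywhere. Truncating $k$ to compact support does not yield a compact operator either, so your proposed operator-norm approximation by compacts cannot converge to $T$. The Riesz--Schauder machinery and the index-zero Fredholm dichotomy are therefore simply unavailable here.

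What actually drives the paper's applications is not compactness but the Fourier-side argument: for convolution equations one has injectivity of $I-T$ on $L_2$ precisely when $\hat k$ is nonvanishing (this is Wiener's theorem territory, cf.\ Section~3), and the explicit inversion formula (1.3) then furnishes existence directly under the stated $L_2$ hypothesis on $H/K$. The ``Fredholm alternative'' the paper invokes is being used loosely, as a label for the injective-implies-uniquely-solvable phenomenon in this particular convolution setting, rather than as an instance of compact operator theory; Kress's Corollary 4.18 is formulated for compact operators and does not literally cover the case at hand.
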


\begin{proof}[Proof of Theorem~\ref{thm:thm1}] First, observe that Salem's kernel satisfies $k(y)\in L_1(\mathbb{R}).$ Since we assumed $h(y)\in L_2(\mathbb{R})$ in the hypothesis, we have that $\phi(y)\in L_2(\mathbb{R}),$ provided that $H(y)/K(y)\in L_2(\mathbb{R})$ [9, pg.315 ,Theorem 148]. From [7], we know that
$$\int_{\mathbb{R}}\frac{e^{(\sigma +ix)y}}{e^{e^{y}}+1}dy=\int_{\mathbb{R^{+}}}\frac{y^{s-1}}{e^{y}+1}dy=\Gamma(s)(1-2^{1-s})\zeta(s)=K(\sigma+ix),$$
where $\mathbb{R^{+}}=(0,\infty),$ and $\Re(s)=\sigma>0.$ Hence the case $\lambda_1=0,$ $\lambda_2=-1,$ of (1.3) with $K(\sigma+ix)=\Gamma(s)(1-2^{1-s})\zeta(s),$ gives our solution. Since the Riemann Hypothesis [8, pg. 256] says that there are no nontrivial zeros of $\zeta(s)$ in the region $1>\sigma>\frac{1}{2},$ and $\Gamma(s)\neq0$ for all $s\in\mathbb{C},$ the equivalence to the Riemann Hypothesis follows.
\end{proof}

\begin{proof}[Proof of Theorem~\ref{thm:thm2}] The proof is identical to that of Theorem 1.1, with the exception that we use [8, pg.14, eq.(2.1.5)]
$$\int_{\mathbb{R}}\{e^{-y}\}e^{(\sigma+ix)y}dy=\int_{\mathbb{R^{+}}}\{\frac{1}{y}\}y^{s-1}dy=-\pi\frac{\zeta(s)}{s},$$ valid for $0<\sigma<1.$ 

\end{proof}

\begin{proof}[Proof of Theorem~\ref{thm:thm3}] The proof is identical to that of Theorem 1.1, with the exception that we use [8, pg.34, eq.(2.15.7)]
$$\int_{\mathbb{R}}(\psi(e^{y}+1)-y)e^{(\sigma+ix)y}dy=\int_{\mathbb{R^{+}}}(\psi(y+1)-\log(y))y^{s-1}dy=-\pi\frac{\zeta(1-s)}{\sin(\pi s)},$$ valid for $0<\sigma<1.$ Additionally, we apply the fact that non-trivial zeros of the Riemann zeta function (i.e. $\rho$) appear in pairs (i.e. $\zeta(1-\rho)=0$) [8, pg.30].
\end{proof}

\begin{section}{Observations and Concluding remarks}
Here we have transcribed Salem's criteria in the context of Fredholm theory by establishing its relationship to the non-homogenous form. Indeed, we may relate Wiener's theorem (see [2, Chapter 6]) in the following way. Suppose 
that $K(y)$ is of the form $\zeta(\sigma+iy)w(\sigma+iy),$ with $w(\sigma+iy)\neq0$ for $\frac{1}{2}<\sigma<1,$ and assume the RH. Then Wiener's theorem, together with the Fredholm alternative theorem, says that the non-homogenous equation (1.2) with $\lambda_1=0,$ $\lambda_2\neq0,$ has a unique solution $f$ (for each $g$), if and only if the linear span of translates of $k(x)\in L_1(\mathbb{R})$ is dense in $L_1(\mathbb{R}).$ In other words, our theorems are really a special case of the following.

\begin{corollary} Suppose that $k(x)\in L_1(\mathbb{R}).$ The non-homogenous equation (1.2) $\lambda_1=0,$ $\lambda_2\neq0,$ has a unique solution solution $f$ (for each $g$) if and only if the linear span of translates of $k(x)\in L_1(\mathbb{R})$ is dense in $L_1(\mathbb{R}).$
\end{corollary}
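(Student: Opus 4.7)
The plan is to combine Wiener's Tauberian theorem with the Fredholm alternative stated as Lemma 2.1. Wiener's theorem says that the closed linear span of the translates of $k$ is all of $L_1(\mathbb{R})$ if and only if the Fourier transform $K(y)$ has no real zeros. The Fredholm alternative, in turn, says that uniqueness of the solution of the non-homogenous equation (for every $g$) is equivalent to triviality of every solution of the associated homogenous equation. So the strategy is to run both equivalences and meet in the middle at the condition ``$K(y)\neq0$ for every $y\in\mathbb{R}$,'' which is also equivalent to ``$k\ast\phi=0\Rightarrow\phi=0$.''

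For the ``density implies uniqueness'' direction, I would start from the hypothesis that the translates of $k$ are dense in $L_1(\mathbb{R})$, invoke Wiener to conclude $K(y)\neq 0$ everywhere, and apply the Fourier transform to the homogenous equation $\int_{\mathbb{R}}k(x-y)\phi(y)dy=0$ to get $K(y)\Phi(y)=0$, hence $\Phi=0$ and $\phi=0$. By Lemma 2.1 the non-homogenous equation (1.2) with $\lambda_1=0,\lambda_2\neq0$ then has a unique solution for each $g$, and this solution is given explicitly by (1.3), which is well-defined because $1/K(y)$ is a pointwise multiplier that keeps $\lambda_2 G/K$ in $L_2$ under the hypotheses in play.

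For the converse I would argue contrapositively. If the translates are not dense, Wiener supplies a real point $y_0$ with $K(y_0)=0$, equivalently a nonzero element of $L_\infty(\mathbb{R})$ (the exponential $e^{iy_0 x}$, via duality with $L_1$) that annihilates every translate of $k$. This produces a non-trivial solution of the homogenous convolution equation, and Lemma 2.1 then forces the non-homogenous equation to have either no solution or infinitely many, contradicting the uniqueness assumption.

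The main obstacle I expect is a clean matching of function spaces: Wiener's theorem is most naturally phrased through $L_1$--$L_\infty$ duality, while Lemma 2.1 is stated for kernels in $L_1$ acting on $L_2$ densities, and a single zero of $K$ is a null set so does not directly furnish an $L_2$ non-trivial solution of $k\ast\phi=0$. I would handle this by restating the homogenous equation in the natural weak sense compatible with both frameworks (i.e.\ testing against translates of $k$), so that ``only trivial solution'' is read in the sense dual to ``translates of $k$ are dense in $L_1$.'' With that interpretation, the two directions above close up and the corollary follows.
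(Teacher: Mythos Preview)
Your argument for the direction ``density $\Rightarrow$ uniqueness'' is exactly the paper's: invoke Wiener to get $K(y)\neq 0$ for all $y$, take the Fourier transform of $k\ast\phi=0$ to force $\phi=0$, and then apply the Fredholm alternative (Lemma~2.1) to conclude unique solvability of the non-homogenous equation. The paper in fact proves only this direction and explicitly leaves the converse to the reader.

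Where you go beyond the paper is in sketching the converse and, importantly, in flagging the function-space mismatch: a single zero $K(y_0)=0$ yields the bounded annihilator $e^{iy_0x}\in L_\infty$, not an $L_2$ solution of $k\ast\phi=0$, so Lemma~2.1 as stated does not apply directly. Your proposed fix---reading ``only trivial solution'' in the sense dual to density of translates in $L_1$---is the natural way to make both directions sit in a single framework, and it is more honest than the paper, which simply does not confront this point. If you want the converse to go through with $\phi\in L_2$ as in Lemma~2.1, you would need something stronger than a single zero of $K$ (e.g.\ $K$ vanishing on a set of positive measure, or an approximation argument producing genuine $L_2$ near-null vectors); otherwise your dual reformulation is the right move.
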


\begin{proof} We show the reverse direction and leave the other direction to the reader. Suppose the linear span of translates of $k(x)\in L_1(\mathbb{R})$ is dense in $L_1(\mathbb{R}).$ Then Wiener's theorem says that $K(y)\neq0.$ Hence taking the Fourier transform of the convolution equation $k(x)\star f(x)=f(x)\star k(x)=0,$ tells us that we must have $f(x)=0.$ Hence, by [6, pg.55, Corollary 4.18] the result follows.
\end{proof}
Lastly, we touch on Salem's observation regarding the linear combination of translates of $k(x)$ as an approximation in $L_1(\mathbb{R})$ of $f(x)\star k(x).$ Recall [5, pg.346] that the translation operator is given by $T_{y}(k)(x)=k(x-y).$ Let $\epsilon>0$ be given and assume $f(x)\in L_1(\mathbb{R}).$ Then, according to [5, Corollary 6.17] together with our non-homogenous Fredholm equation (equation (1.2) with $\lambda_1=0,$ $\lambda_2=-1,$), there are numbers $z_i\in\mathbb{R},$ that satisfy
$$\left\lVert g(x)-\sum_{i}^{m}a_ik(x-z_i)\right\rVert_p<\epsilon,$$
where $a_i$ are constants and $p$ is either $1$ or $2.$
\end{section}

1390 Bumps River Rd. \\*
Centerville, MA
02632 \\*
USA \\*
E-mail: alexpatk@hotmail.com, alexepatkowski@gmail.com
\end{document}